\newtheorem{theorem}{Theorem}
\newtheorem{corollary}[theorem]{Corollary}
\newtheorem{lemma}[theorem]{Lemma}
\newenvironment{proof}[1][Proof]{\noindent\textbf{#1.} }{\ \rule{0.5em}{0.5em}}
\begin{document}

\title{Every finitely generated two-sided ideal of a Leavitt path algebra is
a principal ideal }
\author{Kulumani M. Rangaswamy \\
Department of Mathematics, University of Colorado\\
Colorado Springs, Colorado 80918, USA\\
E-mail: krangasw@uccs.edu}
\maketitle

\begin{abstract}
Let $E$ be an arbitrary graph and $K$ be any field. For every non-graded
ideal $I$ of the Leavitt path algebra $L_{K}(E)$, we give an explicit
description of the generators of $I$. Using this, we show that every
finitely generated ideal of $L_{K}(E)$ must be principal. In particular, if $%
E$ is a finite graph, then every ideal of $L_{K}(E)$ must be principal ideal.
\end{abstract}

\section{Introduction}

The notion of Leavitt path algebras of a graph $E$ was introduced and
initially studied in \cite{AA}, \cite{AMP} as algebraic analogues of C$%
^{\ast }$-algebras and the analysis of the structure of their two-sided
ideals has received much attention in recent years. For instance, Tomforde 
\cite{Tomforde 20} described all the graded ideals in a Leavitt path algebra
in terms of their generators. In \cite{C} and \cite{ABCR} generating sets
for arbitrary ideals of a Leavitt path algebra were established while in 
\cite{APS} and \cite{R} the prime ideal structure of a Leavitt path algebra
was described. In this note, complementing Tomforde's theorem on graded
idreals, we first give an explicit description of a set of generators for
non-graded ideals in the Leavitt path algebra $L_{K}(E)$ of an arbitrary
graph $E$ over a field $K$. Using this we prove that every finitely
generated ideal in $L_{K}(E)$ must be a principal ideal. As a corollary, we
show that if $E$ is a finite graph, then every ideal of $L_{K}(E)$ must be a
principal ideal. The method involves a judicious selection of finitely many
mutually orthogonal generators to replace a given finite set of generators
of the ideal $I$. The sum of these orthogonal generators will then be the
desired single generator for $I$.

\section{Preliminaries}

All the graphs $E$ that we consider here are arbitrary in the sense that no
restriction is placed either on the number of vertices in $E$ (such as being
a countable graph) or on the number of edges emitted by any vertex (such as
being a row-finite graph). \ We shall follow \cite{ABCR}, \cite{R} for the
general notation, terminology and results. For the sake of completeness, we
shall outline some of the concepts and results that we will be using.

A (directed) graph $E=(E^{0},E^{1},r,s)$ consists of two sets $E^{0}$ and $%
E^{1}$ together with maps $r,s:E^{1}\rightarrow E^{0}$. The elements of $%
E^{0}$ are called \textit{vertices} and the elements of $E^{1}$ \textit{edges%
}. If $s^{-1}(v)$ is a finite set for every $v\in E^{0}$, then the graph is
called \textit{row-finite}.

If a vertex $v$ emits no edges, that is, if $s^{-1}(v)$ is empty, then $v$
is called a\textit{\ sink}. A vertex $v$ is called an \textit{infinite
emitter} if $s^{-1}(v)$ is an infinite set, and $v$ is called a \textit{%
regular vertex} if $s^{-1}(v)$ is a finite non-empty set. A path $\mu $ in a
graph $E$ is a finite sequence of edges $\mu =e_{1}\dots e_{n}$ such that $%
r(e_{i})=s(e_{i+1})$ for $i=1,\dots ,n-1$. In this case, $n$ is the length
of $\mu $; we view the elements of $E^{0}$ as paths of length $0$. We denote
by $\mu ^{0}$ the set of vertices of the path $\mu $, i.e., the set $%
\{s(e_{1}),r(e_{1}),\dots ,r(e_{n})\}$.

A path $\mu $ $=e_{1}\dots e_{n}$ is \textit{closed} if $r(e_{n})=s(e_{1})$,
in which case $\mu $ is said to be based at the vertex $s(e_{1})$. A closed
path $\mu $ as above is called \textit{simple} provided it does not pass
through its base more than once, i.e., $s(e_{i})\neq s(e_{1})$ for all $%
i=2,...,n$. The closed path $\mu $ is called a \textit{cycle} if it does not
pass through any of its vertices twice, that is, if $s(e_{i})\neq s(e_{j})$
for every $i\neq j$. An \textit{exit }for a path $\mu =e_{1}\dots e_{n}$ is
an edge $e$ such that $s(e)=s(e_{i})$ for some $i$ and $e\neq e_{i}$. We say
that $E$ satisfies \textit{Condition }(L) if every simple closed path in $E$
has an exit, or, equivalently, every cycle in $E$ has an exit. A graph $E$
is said to satisfy \textit{Condition }(K\textit{)} provided no vertex $v\in
E^{0}$ is the base of precisely one simple closed path, i.e., either no
simple closed path is based at $v$, or at least two are based at $v$.

We define a relation $\geq $ on $E^{0}$ by setting $v\geq w$ if there exists
a path in $E$ from $v$ to $w$. A subset $H$ of $E^{0}$ is called \textit{%
hereditary} if $v\geq w$ and $v\in H$ imply $w\in H$. A hereditary set is 
\textit{saturated} if, for any regular vertex $v$, $r(s^{-1}(v))\subseteq H$
implies $v\in H$.

For each $e\in E^{1}$, we call $e^{\ast }$ a ghost edge. We let $r(e^{\ast
}) $ denote $s(e)$, and we let $s(e^{\ast })$ denote $r(e)$.

Given an arbitrary graph $E$ and a field $K$, the \textit{Leavitt path }$K$%
\textit{-algebra }$L_{K}(E)$ is defined to be the $K$-algebra generated by a
set $\{v:v\in E^{0}\}$ of pairwise orthogonal idempotents together with a
set of variables $\{e,e^{\ast }:e\in E^{1}\}$ which satisfy the following
conditions:

(1) \ $s(e)e=e=er(e)$ for all $e\in E^{1}$.

(2) $r(e)e^{\ast }=e^{\ast }=e^{\ast }s(e)$\ for all $e\in E^{1}$.

(3) (The "CK-1 relations") For all $e,f\in E^{1}$, $e^{\ast}e=r(e)$ and $%
e^{\ast}f=0$ if $e\neq f$.

(4) (The "CK-2 relations") For every regular vertex $v\in E^{0}$, 
\begin{equation*}
v=\sum_{e\in E^{1},s(e)=v}ee^{\ast }.
\end{equation*}

If $\mu =e_{1}\dots e_{n}$ is a path in $E$, we denote by $\mu ^{\ast }$ the
element $e_{n}^{\ast }\dots e_{1}^{\ast }$ of $L_{K}(E)$.

A useful observation is that every element $a$ of $L_{K}(E)$ can be written
as $a=\tsum\limits_{i=1}^{n}k_{i}\alpha _{i}\beta _{i}^{\ast }$, where $%
k_{i}\in K$, $\alpha _{i},\beta _{i}$ are paths in $E$ and $n$ is a suitable
integer (see \cite{AA}).

The following concepts and results from \cite{Tomforde 20} will be used in
the sequel. A vertex $w$ is called a \textit{breaking vertex }of a
hereditary saturated subset $H$ if $w\in E^{0}\backslash H$ is an infinite
emitter with the property that $1\leq |s^{-1}(v)\cap r^{-1}(E^{0}\backslash
H)|<\infty $. The set of all breaking vertices of $H$ is denoted by $B_{H}$.
For any $v\in B_{H}$, $v^{H}$ denotes the element $v-\sum_{s(e)=v,r(e)\notin
H}ee^{\ast }$. Given a hereditary saturated subset $H$ and a subset $%
S\subseteq B_{H}$, $(H,S)$ is called an \textit{admissible pair }and $%
I_{(H,S)}$ denotes the ideal generated by $H\cup \{v^{H}:v\in S\}$. It was
shown in \cite{Tomforde 20} that the graded ideals of $L_{K}(E)$ are
precisely the ideals of the form $I_{(H,S)}$ for some admissible pair $(H,S)$%
. Moreover, it was shown that $I_{(H,S)}\cap E^{0}=H$ and $\{v\in
B_{H}:v^{H}\in I_{(H,S)}\}=S$.

Given an admissible pair $(H,S)$, the corresponding \textit{quotient graph} $%
E\backslash (H,S)$ is defined as follows:%
\begin{align*}
(E\backslash (H,S))^{0}& =(E^{0}\backslash H)\cup \{v^{\prime }:v\in
B_{H}\backslash S\}; \\
(E\backslash (H,S))^{1}& =\{e\in E^{1}:r(e)\notin H\}\cup \{e^{\prime }:e\in
E^{1},r(e)\in B_{H}\backslash S\}.
\end{align*}%
Further, $r$ and $s$ are extended to $(E\backslash (H,S))^{0}$ by setting $%
s(e^{\prime })=s(e)$ and $r(e^{\prime })=r(e)^{\prime }$. Note that, in the
graph $E\backslash (H,S)$, the vertices $v^{\prime }$ are all sinks.

Theorem 5.7 of \cite{Tomforde 20} states that there is an epimorphism $\phi
:L_{K}(E)\rightarrow L_{K}(E\backslash (H,S))$ with $\ker \phi =$ $I_{(H,S)}$
and that $\phi (v^{H})=v^{\prime }$ for $v\in B_{H}\backslash S$. Thus $%
L_{K}(E)/I_{(H,S)}\cong L_{K}(E\backslash (H,S))$. This theorem has been
established in \cite{Tomforde 20} under the hypothesis that $E$ is a graph
with at most countably many vertices and edges; however, an examination of
the proof reveals that the countability condition on $E$ is not utilized. So
the Theorem 5.7 of \cite{Tomforde 20} holds for arbitrary graphs $E$.

\section{Generators of non-graded ideals of $L_{K}(E)$}

As noted earlier, Tomforde \cite{Tomforde 20} described a generating set for
the graded ideals of a Leavitt path algebra $L_{K}(E)$. In this section, as
a complement to Tomforde's theorem, we give an explicit description of a set
of generators for the non-graded ideals in $L_{K}(E)$. These generators are
then used in proving the main theorem of the next section.

We begin with the following useful result from \cite{ABCR}

\begin{theorem}
\label{IdealThm} Let $E$ be an arbitrary graph and $K$ be any field. Then
any non-zero ideal of the $L_{K}(E)$ is generated by elements of the form 
\begin{equation*}
(u+\tsum\limits_{i=1}^{k}k_{i}g^{r_{i}})(u-\tsum\limits_{e\in X}ee^{\ast })
\end{equation*}%
where $u\in E^{0}$, $k_{i}\in K$, $r_{i}$ are\ positive integers, $X$ is a
finite (possibly empty) proper subset of $s^{-1}(u)$ and, whenever $%
k_{i}\neq 0$ for some $i$, then $g$ is a unique cycle based at $u$.
\end{theorem}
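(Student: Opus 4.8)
The plan is to separate a given nonzero ideal $I$ into its largest graded subideal, which Tomforde's description already handles, and a purely non-graded remainder that I will analyse through the Reduction Theorem and the Laurent-polynomial structure of the corners sitting on cycles without exits; the displayed elements with all $k_{i}=0$ will account for the graded generators, and those with some $k_{i}\neq0$ for the non-graded ones. To begin, I would set $H=I\cap E^{0}$ and $S=\{v\in B_{H}:v^{H}\in I\}$, so that $(H,S)$ is an admissible pair and $I_{(H,S)}$ is the largest graded ideal contained in $I$, generated by $H\cup\{v^{H}:v\in S\}$. Each of these generators already has the required shape with vanishing polynomial part: a vertex $u\in H$ is $(u+0)(u-\sum_{e\in X}ee^{\ast})$ with $X=\varnothing$, and for $v\in S$ the element $v^{H}=v-\sum_{s(e)=v,\,r(e)\notin H}ee^{\ast}$ arises by taking $X=\{e\in s^{-1}(v):r(e)\notin H\}$, which is a finite proper subset of $s^{-1}(v)$ because $v\in B_{H}$.

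Next I would pass to the quotient. By Theorem~5.7 of \cite{Tomforde 20} there is an epimorphism $\phi:L_{K}(E)\to L_{K}(F)$ with kernel $I_{(H,S)}$, where $F=E\backslash(H,S)$; set $\bar{I}=\phi(I)$. Since $I_{(H,S)}$ is the largest graded ideal inside $I$ and $\ker\phi\subseteq I$, the correspondence theorem shows the largest graded ideal of $\bar{I}$ is zero, so $\bar{I}\cap F^{0}=\varnothing$. I would now invoke the Reduction Theorem: for every nonzero $a\in L_{K}(F)$ there are paths $\mu,\nu$ with $\mu^{\ast}a\nu$ equal either to a nonzero scalar multiple of a vertex or to a nonzero element of a corner $wL_{K}(F)w\cong K[x,x^{-1}]$, where $w$ is the base of a cycle $c$ without exits. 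Applied to a nonzero $a\in\bar{I}$, the first alternative is impossible, so $\bar{I}$ meets such a corner nontrivially. Because $K[x,x^{-1}]$ is a principal ideal ring, each nonzero intersection $\bar{I}\cap wL_{K}(F)w$ is generated there by one element, which after multiplying by the units $c,c^{\ast}$ and rescaling I may normalize to $f_{w}=w+\sum_{i}k_{i}c^{r_{i}}$, with nonzero constant term and positive exponents.

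The crux, and the step I expect to be the main obstacle, is to prove that the family $\{f_{w}\}$ generates $\bar{I}$. Let $J$ be the ideal they generate, so $J\subseteq\bar{I}$. Here one cannot simply factor out $J$, since $J$ is non-graded and $L_{K}(F)/J$ is not a Leavitt path algebra, so the Reduction Theorem is unavailable in the quotient. Instead I would show that an ideal of $L_{K}(F)$ with trivial graded part is generated by its intersections with the corners on cycles without exits. The mechanism is to take an arbitrary $a\in\bar{I}$, apply the Reduction Theorem inside $L_{K}(F)$ to locate a corner $wL_{K}(F)w$ where $a$ has a nonzero component, note that this component lies in $\bar{I}\cap wL_{K}(F)w=\langle f_{w}\rangle_{\text{corner}}\subseteq J$, and subtract a suitable ideal multiple of $f_{w}$ from $a$ so as to annihilate its contribution along $c$; iterating over the finitely many cycles without exits reachable from the support of $a$ absorbs $a$ into $J$. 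Verifying that each subtraction strictly reduces the relevant complexity, and that distinct cycles without exits interact independently, is the technical heart of the argument.

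Finally I would transport the generators $f_{w}$ back to $L_{K}(E)$. A cycle $c$ without exits in $F$ based at $w$ is the image of a cycle $g$ in $E$ based at the corresponding vertex $u$, and since $\ker\phi\subseteq I$ one has $\phi^{-1}(\bar{I})=I$; hence any preimage of $f_{w}$ lying in $I$ serves as a generator modulo $I_{(H,S)}$. The construction yields such a preimage in the displayed factored form $(u+\sum_{i}k_{i}g^{r_{i}})(u-\sum_{e\in X}ee^{\ast})$, where $X$ is a finite proper subset of $s^{-1}(u)$ consisting of edges deleted in forming $F$ (so that $\phi$ kills the idempotent correction $\sum_{e\in X}ee^{\ast}$ and sends the product to $f_{w}$), and $g$ is the unique cycle based at $u$, having no exit in $F$. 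Adjoining these to the graded generators of the first paragraph gives a generating set of $I$ entirely of the asserted form; the remaining checks, that each such product lies in $I$ and maps to $f_{w}$ under $\phi$, are routine once the quotient correspondence is in place and are exactly where the projection factor $u-\sum_{e\in X}ee^{\ast}$ does its work.
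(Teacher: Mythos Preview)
First, note that the paper does not prove this theorem at all: it is quoted as a known result from \cite{ABCR} and then \emph{used} to prove Lemma~\ref{Lermma3.3} and Theorem~\ref{Generators of Ideal}. So there is no ``paper's own proof'' to compare against; your proposal must stand on its own.

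Your outline is really an attempt to prove the stronger Theorem~\ref{Generators of Ideal} directly, and the part you flag as ``the main obstacle'' is exactly where the argument is incomplete. You want to show that an ideal $\bar I\subseteq L_K(F)$ with trivial graded part is generated by its intersections with the corners $wL_K(F)w$ sitting on cycles without exits. The mechanism you describe --- apply the Reduction Theorem to a nonzero $a\in\bar I$ to get $\mu^{\ast}a\nu\in wL_K(F)w$, then ``subtract a suitable ideal multiple of $f_w$'' from $a$ and iterate --- does not work as stated: the Reduction Theorem controls $\mu^{\ast}a\nu$, not $a$ itself, and there is no evident element of the ideal generated by $f_w$ whose subtraction from $a$ strictly decreases a well-defined complexity. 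You assert that ``verifying that each subtraction strictly reduces the relevant complexity \ldots\ is the technical heart of the argument'' but never supply either the complexity measure or the verification. In the paper this step is handled precisely by invoking Theorem~\ref{IdealThm} (inside the proof of Lemma~\ref{Lermma3.3}): one starts from generators already of the displayed shape and then argues that $g$ must have no exits and $X=\varnothing$. Without Theorem~\ref{IdealThm} you would need an independent argument --- for instance, first showing $\bar I$ is contained in the graded ideal generated by all vertices on cycles without exits (using Condition~(L) on the complementary quotient) and then exploiting that this graded ideal decomposes as a direct sum of pieces Morita equivalent to $K[x,x^{-1}]$ --- but none of that is in your sketch.

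A smaller point: in step~7 your choice of $X$ as ``edges deleted in forming $F$'' can be infinite when $u$ is an infinite emitter with infinitely many edges into $H$, violating the finiteness of $X$ in the statement. In fact you do not need the projection factor at all here: since $\ker\phi\subseteq I$ and $\phi(u+\sum k_i g^{r_i})=f_w\in\bar I$, the element $u+\sum k_i g^{r_i}$ already lies in $I$, so you may simply take $X=\varnothing$ for the polynomial generators.
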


The next Lemma is an extension of Lemma 3.3 in \cite{R} showing that ideals
of $L_{K}(E)$ containing no vertices are generated by a set of mutually
orthogonal polynomials over cycles.

\begin{lemma}
\label{Lermma3.3}Suppose $E$ is an arbitrary graph and $K$ is any field. If $%
N$ is a non-zero ideal of $L_{K}(E)$ which does not contain any vertices of $%
E$, then $N$ is a non-graded ideal and possesses a generating set of
mutually orthogonal generators of the form $y_{j}=(v_{j}+\tsum%
\limits_{i=1}^{n_{j}}k_{ji}g_{j}^{r_{i}})$ where (i) $g_{j}$ is a (unique)
cycle without exits based at the vertex $v_{j}$, (ii) $k_{ji}\in K$ with at
least one $k_{ji}\neq 0$ and $v_{r}\neq v_{s}$ ( so $y_{r}y_{s}=0$) if $%
r\neq s$.
\end{lemma}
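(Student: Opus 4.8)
The plan is to start from the generating set provided by Theorem~\ref{IdealThm}. Since $N$ contains no vertices, I would first analyze each generator $(u+\tsum_{i=1}^{k}k_ig^{r_i})(u-\tsum_{e\in X}ee^\ast)$ and argue that the ``graded part'' $u-\tsum_{e\in X}ee^\ast$ cannot enter the picture nontrivially: if some generator had all $k_i=0$, it would be $u^{H}$-like or contribute $u$ itself to $N$ via saturation/hereditary closure arguments, forcing a vertex into $N$, a contradiction. Hence for every generator we must have some $k_i\neq0$, so there is a cycle $g=g_u$ based at $u$, and I would show $X$ must be empty (otherwise, again, multiplying and using CK-relations one extracts a vertex or a proper idempotent landing a vertex in $N$). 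This reduces the generating set to elements of the form $v+\tsum_{i=1}^{n}k_{i}g^{r_i}$ with $g$ a cycle based at $v$ and at least one $k_i\neq0$.

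Next I would establish that each such cycle $g$ has no exit. The idea: if $g$ had an exit, then by the standard Leavitt-path-algebra computation (cf. the reduction theorem / the fact that cycles with exits behave like in Condition~(L) graphs) one can multiply $v+\tsum k_i g^{r_i}$ on appropriate sides by paths and ghost paths to produce a nonzero vertex inside $N$, contradicting the hypothesis. This is essentially Lemma~3.3 of \cite{R} and I would follow that argument, extending it to the arbitrary-graph setting — the extension is routine since the relevant computations are local around the cycle $g$ and its (finitely many, along $g$) edges. Once $g$ has no exit, the subalgebra generated by $v$ and the edges of $g$ is isomorphic to $M_{\ell}(K[x,x^{-1}])$ where $\ell$ is the length of $g$, and $g$ corresponds to $x$ (up to the matrix units), so $v+\tsum k_i g^{r_i}$ corresponds to a Laurent polynomial; this already shows $N$ is non-graded, since graded ideals intersect $E^0$ in a hereditary saturated set and $N\cap E^0=\varnothing$ would then force $N=0$.

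Now I would pass to mutual orthogonality. Collect the distinct base vertices $v_j$ occurring among the reduced generators; for each $v_j$ let $g_j$ be the associated exitless cycle (unique because an exitless cycle based at a vertex is unique) and let $I_j$ be the ideal generated by all reduced generators based at $v_j$. Since $g_j$ has no exit, $v_j L_K(E) v_j$ is commutative (a Laurent polynomial ring), so the finitely many generators based at $v_j$ generate, over $v_j L_K(E)v_j$, a principal ideal there; pulling back, $I_j$ is generated by a single $y_j=v_j+\tsum_{i=1}^{n_j}k_{ji}g_j^{r_i}$. Finally, for $r\neq s$ the vertices $v_r,v_s$ are distinct idempotents; if $v_r\geq v_s$ were possible one would get a path from $v_r$ into the cycle $g_s$, giving $g_s$ an exit or reaching $v_s$ from within a no-exit cycle, which is impossible unless $v_r$ lies on $g_s$ — but then $g_r$ and $g_s$ would share a vertex and, being both exitless, coincide, contradicting $v_r\neq v_s$. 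Hence $v_r v_s=0$ and $v_r L_K(E) v_s=0$, so $y_r y_s=0$. Therefore $N=\sum_j I_j$ is generated by the mutually orthogonal family $\{y_j\}$, as claimed.

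The main obstacle I anticipate is the step showing $X=\varnothing$ and that a generator with all $k_i=0$ would force a vertex into $N$: this requires carefully tracking what ideal is generated by an element like $u-\tsum_{e\in X}ee^\ast$ (a ``partial CK-2'' element, related to a breaking-vertex element $u^{H}$) and verifying, via the hereditary-saturated-closure machinery and Tomforde's description of graded ideals, that its ideal always meets $E^0$ nontrivially unless it is zero. The arbitrary-graph generality (infinite emitters, breaking vertices) is exactly where one must be careful rather than invoking row-finite shortcuts, but all the needed facts — Theorem~5.7 of \cite{Tomforde 20}, the structure of $I_{(H,S)}$, and the local picture of an exitless cycle — are available, so the argument should go through.
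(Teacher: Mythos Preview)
Your plan follows the paper's approach closely: start from Theorem~\ref{IdealThm}, strip away the factor $u-\sum_{e\in X}ee^\ast$ and show $g$ is exitless, then consolidate generators at each base vertex via the PID structure of $K[x]$ (the paper phrases this as iterated gcd's rather than invoking the corner $vL_K(E)v$). Two points deserve correction.

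First, the step you flag as the main obstacle is handled in the paper not through graded-ideal or hereditary-closure machinery but by a direct local computation: pick $f\in s^{-1}(u)\setminus X$ and compute $f^\ast y f$. If $f$ is not the initial edge of $g$ this already yields $r(f)\in N$, a contradiction; otherwise it yields $r(f)+\sum k_i h^{r_i}$ for the rotated cycle $h$, from which one recovers $u+\sum k_i g^{r_i}\in N$ and then tests a putative exit $e$ of $g$ by the same trick to land $r(e)\in N$. Once $g$ is exitless, $|s^{-1}(u)|=1$ forces $X=\varnothing$ immediately. No appeal to $I_{(H,S)}$ or Tomforde's description is needed here.

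Second, your orthogonality argument overreaches and contains an error. The claim $v_r L_K(E) v_s=0$ is false in general: take $v_r,v_s$ distinct vertices on the \emph{same} exitless cycle, so that there is a path from $v_r$ to $v_s$. Your attempted proof goes wrong at ``$g_r$ and $g_s$ would \dots\ coincide, contradicting $v_r\neq v_s$'': two exitless cycles can share all their edges while being based at different vertices. Fortunately none of this is needed. Since each $y_j$ satisfies $y_j=v_j y_j v_j$, the product $y_r y_s$ lies in $v_r L_K(E)\, v_r v_s\, L_K(E) v_s=0$ as soon as $v_r\neq v_s$, which is exactly what the paper uses. (A minor aside: you write ``the finitely many generators based at $v_j$''; there is no reason the original generating set is finite, but the PID argument you invoke works regardless.)
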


\begin{proof}
Since $N$ is non-zero and since $H=N\cap E^{0}$ is the empty set, $N$ must
be a non-graded ideal, because if $N$ was a graded ideal, then $N$ must be $%
\{0\}$ since, by Tomforde \cite{Tomforde 20}, $N$ is generated by $H\cup
\lbrack \{v^{H}:v\in B_{H}\}\cap N]$ and $H$, $B_{H}$ are both empty sets.
From Theorem \ref{IdealThm}, we know that $N$ is generated by elements of
the form $y=(u+\tsum\limits_{i=1}^{n}k_{i}g^{r_{i}})(u-\tsum_{e\in
X}ee^{\ast })\neq 0$ where $g$ is a unique cycle in $E$ based at the vertex $%
u$ and where $X$ is a finite proper subset of $s^{-1}(u)$.

We wish to show that, for each such generator $y=(u+\tsum%
\limits_{i=1}^{n}k_{i}g^{r_{i}})(u-\tsum_{e\in X}ee^{\ast })$, the
corresponding cycle $g$ has no exits in $E$ and that $X$ must be an empty
set, so that $y=(u+\tsum\limits_{i=1}^{n}k_{i}g^{r_{i}})$. By hypothesis,
there is an $f\in s^{-1}(u)\backslash X$. Let $r(f)=w$. This $f$ must be the
initial edge of $g$. Because otherwise $f^{\ast }g=0$ and $(\tsum_{e\in
X}ee^{\ast })f=0$, and we obtain $f^{\ast }yf=f^{\ast
}(u+\tsum\limits_{i=1}^{n}k_{i}g^{r_{i}})f=f^{\ast }uf=r(f)=w\in N$, a
contradiction since $N$ contains no vertices. So we can write $g=f\alpha $
and let $h$ denote the cycle $\alpha f$ (based at $w$). Note that, in this
case, $f^{\ast }yf=f^{\ast
}(u+\tsum\limits_{i=1}^{n}k_{i}g^{r_{i}})(u-\tsum_{e\in X}ee^{\ast
})f=ff^{\ast }+f^{\ast
}\tsum\limits_{i=1}^{n}k_{i}g^{r_{i}}f=w+\tsum%
\limits_{i=1}^{n}k_{i}h^{r_{i}}\in N$. Then $\alpha ^{\ast
}(w+\tsum\limits_{i=1}^{n}k_{i}h^{r_{i}})\alpha
=u+\tsum\limits_{i=1}^{n}k_{i}g^{r_{i}}\in N$. Suppose, by way of
contradiction, there is an exit $e$ at a vertex $u^{\prime }$ on $g$. Let $%
\beta $ be the part of $g$ connecting $u$ to $u^{\prime }$ (where we take $%
\beta =u$ if $u^{\prime }=u$) and $\gamma $ be the part of $g$ from $%
u^{\prime }$ to $u$ ( so that $g=\beta \gamma $). Then, denoting the cycle $%
\gamma \beta $ (based at $u^{\prime }$) by $d$, we get $e^{\ast }\beta
^{\ast }(u+\tsum\limits_{i=1}^{n}k_{i}g^{r_{i}})\beta e=e^{\ast }(u^{\prime
}+\tsum\limits_{i=1}^{n}k_{i}d^{r_{i}})e=e^{\ast }e=r(e)\in N$, a
contradiction. Thus the cycle $g$ has no exits. In particular, $%
|s^{-1}(u)|=1 $ and this implies that $X$ must be an empty set, as $X$ is a
proper subset of $s^{-1}(u)$.

Thus the generators of $N$ are of the form $y=(u+\tsum%
\limits_{i=1}^{n}k_{i}g^{r_{i}})$. If there is another generator of $N$ of
the form $y^{\prime }=u+\tsum\limits_{i=1}^{n^{\prime }}k_{i}^{\prime
}(g^{\prime })^{s_{i}}$ with the same vertex $u$, then, by the uniqueness of 
$g$, $g^{\prime }=g$. Using the convention that $g^{0}=u$, we can write $%
y=p(g)$ and $y^{\prime }=q(g)$ where $p(x)=1+\tsum%
\limits_{i=1}^{n}k_{i}x^{r_{i}}$ and $p^{\prime
}(x)=1+\tsum\limits_{i=1}^{n^{\prime }}k_{i}^{\prime }x^{s_{i}}$ both
belonging to $K[x]$. If $d(x)$ is the gcd of $p(x)$ and $q(x)$ in $K[x]$,
then we can assume, without loss of generality, that $d(0)=1$. Moreover, we
\ can write $d(x)=a(x)p(x)+b(x)q(x)$ for suitable $a(x),b(x)\in K[x]$.
Clearly $d(g)=a(g)p(g)+b(g)q(g)\in I$ and we can then replace both $y=p(g)$
and $y^{\prime }=q(g)$ by $d(g)$. Iteration of this process guarantees that
different generators $y_{j}$ and $y_{k}$ involve different vertices $v_{j}$
and $v_{k}$ and so $y_{j}y_{k}=0=y_{k}y_{j}$ for $j\neq k$, resulting in a
mutually orthogonal set of generators for the ideal $I.$
\end{proof}

Since Condition (L) on a graph demands that cycles have exits, an immediate
consequence of Lemma \ref{Lermma3.3} is the following well-known result.

\begin{corollary}
\label{(L) implies vertex} \cite{Aranda Pino 10} Let $E$ be an arbitrary
graph. If $E$ satisfies Condition (L), then every non-zero two-sided ideal
of $L_{K}(E)$ contains a vertex.
\end{corollary}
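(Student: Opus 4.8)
The plan is to argue by contradiction and reduce the statement immediately to Lemma~\ref{Lermma3.3}. Suppose, contrary to the claim, that $L_{K}(E)$ possesses a non-zero two-sided ideal $N$ containing no vertex of $E$, that is, $N\cap E^{0}=\emptyset$. Since $N\neq 0$, Lemma~\ref{Lermma3.3} applies and furnishes a generating set of $N$ consisting of elements $y_{j}=v_{j}+\tsum_{i=1}^{n_{j}}k_{ji}g_{j}^{r_{i}}$ in which each $g_{j}$ is a cycle \emph{without exits} in $E$ based at $v_{j}$, with at least one $k_{ji}\neq 0$. Because $N\neq 0$, this generating set is non-empty, so there is at least one such index $j$; in particular $E$ contains a cycle $g_{j}$ having no exit.

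This, however, contradicts Condition~(L), which by definition requires every cycle of $E$ (equivalently, every simple closed path) to have an exit. Hence no such ideal $N$ can exist, and every non-zero two-sided ideal of $L_{K}(E)$ must contain a vertex of $E$. Equivalently, one may phrase the argument without contradiction: given a non-zero ideal $N$, either $N$ meets $E^{0}$, or $N$ falls under the hypothesis of Lemma~\ref{Lermma3.3}, in which case its associated generating cycles would all be exit-free, contradicting Condition~(L); so the first alternative must hold.

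Since the entire substance of the proof is carried by Lemma~\ref{Lermma3.3}, there is essentially no obstacle here. The only point requiring a moment's care is verifying that the hypothesis of that lemma — $N$ contains no vertices — is precisely the negation of the conclusion we seek, so that the contrapositive reduction is legitimate; and that the cycles $g_{j}$ produced by the lemma are genuinely cycles of $E$ (not of some quotient graph), which is exactly what clause~(i) of the lemma asserts. With these observations in place, the corollary follows in one line from the lemma together with the definition of Condition~(L).
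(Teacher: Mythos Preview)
Your proof is correct and follows exactly the paper's approach: the paper simply remarks that since Condition~(L) demands that every cycle have an exit, the corollary is an immediate consequence of Lemma~\ref{Lermma3.3}. Your write-up just makes this one-line deduction explicit via the obvious contrapositive.
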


The next theorem gives an explicit description of the generators of the
non-graded ideals of a Leavitt path algebra.

\begin{theorem}
\label{Generators of Ideal}Let $I$ be a non-zero ideal of $L_{K}(E)$ with $%
I\cap E^{0}=H$ and $S=\{v\in B_{H}:v^{H}\in I\}$. Then $I$ is generated by $%
H\cup \{v^{H}:v\in S\}\cup Y$, where $Y$ is a set of mutually orthogonal
elements of the form $(u+\tsum\limits_{i=1}^{n}k_{i}g^{r_{i}})$ in which (i) 
$g$ is a (unique) cycle without exits in $E^{0}\backslash H$ based at a
vertex $u$ in $E^{0}\backslash H$ and (ii) $k_{i}\in K$ with at least one $%
k_{i}\neq 0$. Moreover, $I$ is non-graded if and only if $Y$ is non-empty.
\end{theorem}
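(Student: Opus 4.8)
The plan is to combine Theorem \ref{IdealThm} with the quotient-graph machinery and Lemma \ref{Lermma3.3}. First I would pass to the quotient algebra $L_K(E)/I_{(H,S)}\cong L_K(E\backslash(H,S))$ via the epimorphism $\phi$ of Theorem 5.7 of \cite{Tomforde 20}, where $H=I\cap E^0$ and $S=\{v\in B_H:v^H\in I\}$. Since $I_{(H,S)}$ is the largest graded ideal contained in $I$ (this uses $I_{(H,S)}\cap E^0=H$ and $\{v\in B_H:v^H\in I_{(H,S)}\}=S$ together with Tomforde's classification of graded ideals), the image $\bar I=\phi(I)$ is an ideal of $L_K(E\backslash(H,S))$ that contains no vertices of the quotient graph: any vertex of $E\backslash(H,S)$ lying in $\bar I$ would pull back to a vertex of $E$ or a breaking element $v^H$ lying in $I$ but not already in $I_{(H,S)}$, contradicting the maximality. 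I should check separately that $\bar I\neq 0$ precisely when $I\neq I_{(H,S)}$, i.e. when $I$ is non-graded.

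Next I would apply Lemma \ref{Lermma3.3} to the ideal $\bar I$ inside $L_K(E\backslash(H,S))$: since $\bar I$ contains no vertices, it has a generating set of mutually orthogonal elements $\bar y_j = (\bar v_j + \sum_i \bar k_{ji}\bar g_j^{r_i})$ where $\bar g_j$ is a unique cycle without exits in the quotient graph based at $\bar v_j$, with distinct $\bar v_j$. The key observation is that a cycle without exits in $E\backslash(H,S)$ consists entirely of vertices and edges that survive in the quotient untouched (the added primed vertices are sinks, hence never on a cycle; and a vertex on an exit-free cycle cannot be a breaking vertex), so each such cycle lifts verbatim to a cycle $g_j$ in $E$ lying wholly in $E^0\backslash H$, and moreover $g_j$ has no exits in $E^0\backslash H$. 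Thus I can choose lifts $y_j=(u_j+\sum_i k_{ji}g_j^{r_i})\in L_K(E)$ with $\phi(y_j)=\bar y_j$, and because the $u_j$ are distinct vertices the $y_j$ are mutually orthogonal in $L_K(E)$ as well. Set $Y=\{y_j\}$.

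It remains to assemble the generating set. On one hand $H\cup\{v^H:v\in S\}$ generates $I_{(H,S)}\subseteq I$, and each $y_j\in I$ since $\phi(y_j)=\bar y_j\in\bar I=\phi(I)$ forces $y_j\in I + \ker\phi = I + I_{(H,S)} = I$. Conversely, the ideal $J$ generated by $H\cup\{v^H:v\in S\}\cup Y$ contains $I_{(H,S)}=\ker\phi$ and satisfies $\phi(J)\supseteq \phi(Y)$-generated ideal $=\bar I=\phi(I)$, hence $I\subseteq J+\ker\phi = J$; combined with $J\subseteq I$ this gives $J=I$. Finally the ``moreover'' clause: if $Y=\emptyset$ then $I$ is generated by $H\cup\{v^H:v\in S\}$, an admissible-pair generating set, so $I$ is graded; conversely if $I$ is graded then $I=I_{(H,S)}$, so $\bar I=0$, so by Lemma \ref{Lermma3.3} (which produces a nonempty generating set for any nonzero vertex-free ideal) we must have $\bar I$ trivial and $Y$ can be taken empty.

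The main obstacle I anticipate is the lifting step: verifying carefully that a cycle without exits in the quotient graph $E\backslash(H,S)$ genuinely comes from a cycle without exits in $E^0\backslash H$. One must rule out that a vertex $u$ on such a cycle is a breaking vertex of $H$ (if it were, $u$ would still emit infinitely many edges in $E$, but its image would be a regular vertex not on an exit-free... actually $u^{H}$-type vertices are not added as cycle vertices — still this needs the explicit description of $(E\backslash(H,S))^1$), and that no edge of $E$ out of $u$ with range in $H$ secretly provides an exit that was deleted in the quotient — but such an edge would have range in $H$, not in $E^0\backslash H$, so it is not an exit \emph{within} $E^0\backslash H$, which is exactly the qualified statement in (i). Pinning down these case checks, and the precise sense in which $I_{(H,S)}$ is the maximal graded ideal inside $I$, is where the real work lies; the rest is bookkeeping with $\phi$.
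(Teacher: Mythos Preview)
Your approach is correct and is essentially the same as the paper's: pass to $L_K(E)/I_{(H,S)}\cong L_K(E\backslash(H,S))$, observe that the image of $I$ contains no vertices (the paper asserts this without the justification you supply), apply Lemma~\ref{Lermma3.3} there, and then note that the exit-free cycles involved live entirely in $E^0\backslash H$ because the primed vertices are sinks. Your write-up is more explicit about the lifting and about the ``moreover'' clause than the paper's, but the strategy and the key inputs are identical.
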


\begin{proof}
Let $J=I_{(H,S)}$ be the ideal of $L_{K}(E)$ generated by $H\cup
\{v^{H}:v\in S\}$. We may assume that $J\subsetneqq I$ since there is
nothing to prove if $I=J$. By Tomforde \cite{Tomforde 20}, $L_{K}(E)/J\cong
L_{K}(E\backslash (H,S))$. Identifying $L_{K}(E)/J$ \ with $%
L_{K}(E\backslash (H,S))$ via this isomorphism, we note that the non-zero
ideal $I/J$ contains no vertices of $E\backslash (H,S)$ and so by Lemma \ref%
{Lermma3.3}, $I/J$ is generated by elements of the form $(u+\tsum%
\limits_{i=1}^{n}k_{i}g^{r_{i}})$ where $g$ is a (unique) cycle without
exits in $E\backslash (H,S)$ based at a vertex $u\in (E\backslash
(H,S))^{0}=E^{0}\backslash H\cup \{v^{\prime }:v\in B_{H}\backslash S\}$ and 
$k_{i}\in K$ with at least one $k_{i}\neq 0$. It is then clear that the
ideal $I$ is generated by $H\cup \{v^{H}:v\in S\}\cup Y$, where $Y$ is the
set of mutually orthogonal elements of the form $y=(u+\tsum%
\limits_{i=1}^{n}k_{i}g^{r_{i}})$ where $g$ is a (unique) cycle without
exits in $E\backslash (H,S)$ based at a vertex $u\in (E\backslash (H,S))^{0}$
and $k_{i}\in K$ with at least one $k_{i}\neq 0$. Observe that since the $%
v^{\prime }\in (E\backslash (H,S))^{0}$ are all sinks, both $u$ and the
vertices on $g$ all belong to $E^{0}\backslash H$.
\end{proof}

Since Condition (K) on the graph $E$ implies that the set $Y$ in above
theorem must be empty, the following well-known result (see, for eg. \cite%
{Tomforde 20}) can be derived immediately from Theorem \ref{Generators of
Ideal}.

\begin{corollary}
Let $E$ be an arbitrary graph. Then $E$ satisfies Condition (K) if and only
if every ideal of $L_{K}(E)$ is graded.
\end{corollary}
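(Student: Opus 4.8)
The plan is to read both implications off Theorem~\ref{Generators of Ideal}, whose last clause says that an ideal $I$ of $L_{K}(E)$ is non-graded exactly when its set $Y$ of ``polynomial'' generators is non-empty, and whose description of $Y$ ties non-gradedness to the presence of a cycle without exits in a quotient graph $E\backslash(H,S)$. Thus ``every ideal of $L_{K}(E)$ is graded'' is the same as ``no quotient graph $E\backslash(H,S)$ has a cycle without exits'', and the task is to match this with Condition~(K). The two implications are handled separately, the first being the ``immediate'' one the surrounding text alludes to.

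$(\Rightarrow)$ Assume $E$ satisfies Condition~(K) and suppose, for contradiction, that some ideal $I$ is non-graded. By Theorem~\ref{Generators of Ideal} there is a cycle $g=e_{1}\cdots e_{n}$ without exits in $E\backslash(H,S)$ based at some $u\in E^{0}\backslash H$. Since the adjoined vertices of $E\backslash(H,S)$ are sinks, a cycle cannot meet them, so $g$ is in fact a genuine cycle of $E$ based at $u$. Condition~(K) then forces $u$ to be the base of a second simple closed path $h=f_{1}\cdots f_{m}\neq g$ in $E$. First I would observe that every vertex of $h$ lies outside $H$: each such vertex is joined back to $u$ by a subpath of $h$, so if it lay in the hereditary set $H$ then $u$ would too. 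Since neither of $g,h$ can be an initial segment of the other (a cycle, and a simple closed path, cannot revisit their base), they agree on an initial stretch and then diverge at some index $i$ with $e_{i}\neq f_{i}$ and $s(e_{i})=s(f_{i})$ a vertex of $g$. As $r(f_{i})\notin H$, the edge $f_{i}$ survives in $E\backslash(H,S)$, where it is an exit of $g$ --- contradicting the choice of $g$. Hence $Y$ is empty for every ideal, i.e.\ every ideal is graded.

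$(\Leftarrow)$ Assume every ideal of $L_{K}(E)$ is graded and suppose Condition~(K) fails; pick a vertex $v$ that is the base of exactly one simple closed path $c$. I would first show $c$ is a cycle: if $c$ revisited a vertex $w\neq v$, then excising the closed subpath at $w$ leaves a strictly shorter closed path at $v$, inside which there is a simple closed path at $v$ shorter than $c$, hence distinct from it, against uniqueness. So write $c=e_{1}\cdots e_{n}$ with vertices $v=w_{1},w_{2},\dots,w_{n}$ (indices mod $n$). Let $R$ be the set of ranges of the exits of $c$, let $H$ be the hereditary saturated closure of $R$, and put $F:=E\backslash(H,B_{H})$. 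The heart of the proof is that \emph{no vertex of $c$ lies in $H$}. No vertex of $c$ lies in the hereditary closure of $R$: if some exit of $c$ (based at a $w_{i}$) could be followed by a path back to a vertex of $c$, then, concatenating with arcs of $c$ and extracting a simple closed path, we would produce a simple closed path at $v$ using that exit edge, hence $\neq c$. Nor can the saturation steps add any $w_{k}$: such a $w_{k}$ would be regular with the ranges of all its emitted edges already present at an earlier stage, but the cycle edge $e_{k}$ has range $w_{k+1}$, so $w_{k+1}$ would already be in $H$; choosing $w_{k}$ to be the first cycle vertex to enter $H$ yields a contradiction (the loop case $n=1$ being immediate). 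With $v\notin H$ in hand, every vertex and edge of $c$ survives into $F$ (ranges of $c$-edges are cycle vertices, so not in $H$), while every exit of $c$ is killed (its range lies in $H$ and, since we quotient by $S=B_{H}$, no ghost-primed edges are created); hence $c$ is a cycle without exits in $F$. Finally $L_{K}(F)\cong L_{K}(E)/I_{(H,B_{H})}$ has the non-graded proper ideal generated by $v+c$: its trace on the corner $vL_{K}(F)v\cong K[x,x^{-1}]$ is the proper ideal $(1+x)K[x,x^{-1}]$, which omits $v$, so the ambient ideal cannot be graded. Its preimage in $L_{K}(E)$ sits above the graded ideal $I_{(H,B_{H})}$ and is therefore non-graded, contradicting the hypothesis.

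The main obstacle is the converse, specifically the claim that the hereditary saturated closure $H$ of the exit-ranges of $c$ avoids every vertex of $c$: the hereditary half is the ``an exit cannot loop back'' argument, but the saturated half needs a small induction on the stages of the saturation that is easy to state and easy to get subtly wrong, and the degenerate loop case must be treated. The remaining pieces --- the initial-segment/divergence bookkeeping of the forward direction, the standard isomorphism $vL_{K}(F)v\cong K[x,x^{-1}]$ for a cycle without exits, and the stability of non-gradedness under pullback along a quotient by a graded ideal --- are routine.
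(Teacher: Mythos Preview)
Your proof is correct. For the forward direction you are doing exactly what the paper does, only with the details spelled out: the text preceding the corollary says merely that Condition~(K) forces the set $Y$ of Theorem~\ref{Generators of Ideal} to be empty, and your argument (a cycle without exits in $E\backslash(H,S)$ lifts to a cycle of $E$; Condition~(K) produces a second simple closed path at its base, whose first point of divergence gives an exit with range outside $H$, hence surviving in the quotient) is precisely the verification of that one-line claim.

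For the converse the paper gives no argument at all: it calls the corollary a ``well-known result'' and points to \cite{Tomforde 20}. Your construction---take the unique simple closed path $c$ at $v$, show it is a cycle, let $H$ be the hereditary saturated closure of the ranges of its exits, verify that no vertex of $c$ lands in $H$, so that $c$ becomes a cycle without exits in $F=E\backslash(H,B_H)$, and then exhibit the non-graded ideal generated by $v+c$---is a genuine self-contained proof of the direction the paper leaves to the literature. The one point you flag as delicate, the ``trace'' computation, is fine: since $v+c=v(v+c)v$, one has $v\langle v+c\rangle v=(vL_K(F)v)(v+c)(vL_K(F)v)$, which under the standard isomorphism $vL_K(F)v\cong K[x,x^{-1}]$ is exactly $(1+x)K[x,x^{-1}]$, a proper ideal; hence $v\notin\langle v+c\rangle$, and the ideal cannot be graded. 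Your saturation argument (choose the least stage at which a cycle vertex enters $H$; the outgoing cycle edge forces the next cycle vertex in at an earlier stage) is also correct, and covers the loop case uniformly. So your treatment agrees with the paper where the paper says anything, and supplies what the paper defers to a citation.
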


\section{Finitely generated ideals of $L_{K}(E)$}

Here we show that any finitely generated two-sided ideal $I$ in a Leavitt
path algebra must be a principal ideal. The main idea of the proof is to
start with a generating set of the ideal $I$ as given Theorem \ref%
{Generators of Ideal} and to replace any finite subset of these generators
by an appropriate finite set of mutually orthogonal generators. The sum of
these orthogonal generators will be a desired single generator. As a
consequence, we derive that if $E$ is a finite graph, then the Leavitt path
algerbra $L_{K}(E)$ will be a two-sided principal ideal ring, that is, every
ideal of $L_{K}(E)$ will be a principal ideal.

\begin{theorem}
\label{Finited Gen Principal}Let $E$ be an arbitrary graph. Then every
finitely generated ideal of $L_{K}(E)$ is a principal ideal.
\end{theorem}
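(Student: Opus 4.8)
The plan is to take the generating set of $I$ supplied by Theorem \ref{Generators of Ideal}, cut it down to a finite subset, observe that this finite subset already consists of \emph{pairwise orthogonal} elements, and then show that the sum of these elements generates $I$. We may assume $I\neq 0$, the zero ideal being $\langle 0\rangle$. By Theorem \ref{Generators of Ideal}, $I$ is generated by $G=H\cup\{v^{H}:v\in S\}\cup Y$, where $H=I\cap E^{0}$, $S=\{v\in B_{H}:v^{H}\in I\}$, and $Y$ is a set of mutually orthogonal elements $y=u+\sum_{i}k_{i}g^{r_{i}}$ in which $g$ is a cycle without exits in $E^{0}\setminus H$ based at a vertex $u\in E^{0}\setminus H$ and some $k_{i}\neq 0$. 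Since $I$ is finitely generated, each of its given generators lies in the (two-sided) ideal generated by finitely many elements of $G$; collecting all of these finitely many elements, one obtains a \emph{finite} subset $G_{0}=H_{0}\cup\{v^{H}:v\in S_{0}\}\cup Y_{0}$ of $G$, with $H_{0}\subseteq H$, $S_{0}\subseteq S$, $Y_{0}\subseteq Y$ all finite, such that $I=\langle G_{0}\rangle$.

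Next I would check that the elements of $G_{0}$ are pairwise orthogonal. The vertices in $H_{0}$ are pairwise orthogonal idempotents; a short computation with the CK-relations shows that each $v^{H}$ ($v\in S_{0}$) is idempotent and that $v^{H}w^{H}=0$ for $v\neq w$ in $S_{0}$; and the elements of $Y_{0}$ are pairwise orthogonal by construction. The cross terms vanish as well: if $w\in H_{0}$ and $y=u+\sum_{i}k_{i}g^{r_{i}}\in Y_{0}$, then $w\in H$ while $u$ and every vertex on $g$ lie outside $H$, so $w\neq u$ and $wy=yw=0$; if $w\in H_{0}$ and $v\in S_{0}$, then $w\in H$ and $v\notin H$ force $wv^{H}=v^{H}w=0$; and if $v\in S_{0}\subseteq B_{H}$ and $y\in Y_{0}$ is as above, then $v$ is an infinite emitter whereas $u$ (and each vertex of the exit-free cycle $g$) emits exactly one edge, so $v\neq u$, and using that $e^{\ast }u=0$ whenever $s(e)=v$ one gets $v^{H}y=yv^{H}=0$.

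Finally, put $z=\sum_{w\in H_{0}}w+\sum_{v\in S_{0}}v^{H}+\sum_{y\in Y_{0}}y$. For $w\in H_{0}$, orthogonality and idempotency give $wz=w$, so $w\in\langle z\rangle$; likewise $v^{H}z=v^{H}\in\langle z\rangle$ for $v\in S_{0}$. For $y=u+\sum_{i}k_{i}g^{r_{i}}\in Y_{0}$, the vertex $u$ is orthogonal to every other element of $G_{0}$ and $uy=y$ (since $g$ is based at $u$), so $uz=y$ and hence $y\in\langle z\rangle$. Thus $G_{0}\subseteq\langle z\rangle$, whence $I=\langle G_{0}\rangle\subseteq\langle z\rangle\subseteq I$; that is, $I=\langle z\rangle$ is principal.

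The step I expect to be the main obstacle is the verification of cross-orthogonality in the second paragraph, which hinges on the structural facts that the base vertices $u$ of the cycle-polynomials lie in $E^{0}\setminus H$ and, emitting a single edge, cannot be breaking vertices of $H$ (and similarly for the remaining vertices on the exit-free cycles). The idempotency of $v^{H}$ is a routine CK-relation computation, and once these facts are in hand the ``replace mutually orthogonal generators by their sum'' argument goes through without difficulty.
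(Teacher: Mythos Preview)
Your overall strategy---reduce to a finite subset of the canonical generators and sum them---is exactly the paper's, and most of your orthogonality checks are correct. But there is a genuine gap in the cross-term you flagged yourself: the claim that a breaking-vertex generator $v^{H}$ and a cycle-polynomial $y=u+\sum_{i}k_{i}g^{r_{i}}$ are automatically orthogonal is false, because your reason ``$u$ emits exactly one edge'' does not hold in $E$. Theorem \ref{Generators of Ideal} only says that $g$ has no exits \emph{in $E^{0}\setminus H$}; the base vertex $u$ may emit infinitely many edges into $H$ and hence can be an infinite emitter in $E$, indeed a breaking vertex of $H$ lying in $S$. A concrete instance: take a vertex $u$ with a loop $g$ and infinitely many edges $e_{1},e_{2},\ldots$ to a sink $w$; set $H=\{w\}$. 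Then $u\in B_{H}$ with $u^{H}=u-gg^{\ast}$, the loop $g$ is exit-free in $E^{0}\setminus H$, and for $y=u+g$ one computes
\[
u^{H}\,y \;=\; (u-gg^{\ast})(u+g)\;=\;u+g-gg^{\ast}-g\;=\;u-gg^{\ast}\;=\;u^{H}\neq 0,
\]
so $u^{H}$ and $y$ are not orthogonal. Thus $v=u$ is possible and your inference ``$v\neq u$'' fails, which invalidates the subsequent use of $e^{\ast}u=0$ and the orthogonality conclusion.

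The paper's proof isolates precisely this case. When $v^{H}$ and $y$ share the same base vertex $v$, the only edge from $v$ into $E^{0}\setminus H$ is the initial edge of $g$, so $v^{H}=v-ee^{\ast}$ with $e$ that initial edge; the computation above then gives $v^{H}y=v^{H}$, so $v^{H}$ already lies in the ideal generated by $y$ and may simply be deleted from the finite generating set. After finitely many such deletions the surviving generators $y_{1},\ldots,y_{t}$ are supported at pairwise distinct vertices $v_{1},\ldots,v_{t}$ (with $y_{i}=v_{i}y_{i}v_{i}$), and then your final ``sum them up'' step goes through verbatim with $a=y_{1}+\cdots+y_{t}$. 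So the fix is small, but without it the argument as written does not close.
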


\begin{proof}
Suppose $E$ is an arbitrary graph and $I$ is an ideal of $L_{K}(E)$
generated by a finite set of elements $a_{1},...,a_{m}$ in $L_{K}(E)$. By
Theorem \ref{Generators of Ideal}, $I$ also has a generating set $H\cup
\{v^{H}:v\in S\}\cup Y$ where $H=I\cap E^{0}$, $S=\{v\in B_{H}:v^{H}\in I\}$
and $Y$ is a set of elements of the form $y=(u+\tsum%
\limits_{i=1}^{n}k_{i}g^{r_{i}})$ where $g$ is a (unique) cycle without
exits in $E\backslash (H,S)$ based at a vertex $u$ in $(E\backslash
(H,S))^{0}=E^{0}\backslash H$ and $k_{i}\in K$ with at least one $k_{i}\neq
0 $. Since each $a_{i}$ can be written as a finite sum of elements of the
form $\tsum\limits_{i=1}^{r_{1}}k_{i}\alpha _{i}\beta _{i}^{\ast }x\gamma
_{i}\delta _{i}^{\ast }$ where $x\in H\cup \{v^{H}:v\in S\}\cup Y$, we may
assume without loss of generality that the ideal $I$ is generated by a
finite set of elements $x_{1},...,x_{n}$ \ where $x_{i}\in H\cup
\{v^{H}:v\in S\}\cup Y$. We wish to re-choose the generators $x_{i}$ such
that for $i\neq j$, $x_{i}x_{j}=0$. This property clearly holds if $%
x_{i},x_{j}$ are different elements in either $H\cup \{v^{H}:v\in S\}$ or $%
H\cup Y$.

So we need only to consider the case when $x_{i}\in \{v^{H}:v\in S\}$ and $%
x_{j}\in Y$ with $x_{i}x_{j}\neq 0$ so that $x_{i}=v-\tsum\limits_{e\in
s^{-1}(v),r(e)\notin H}ee^{\ast }$ and $x_{j}=v+\tsum%
\limits_{i=1}^{n}k_{i}g^{r_{i}}$ where $v\in B_{H}$ and $g$ is a cycle
without exits based at $v$ in $E^{0}\backslash H$ . Since $g$ has no exits
in $E\backslash (H,S)$, $x_{i}=v-ee^{\ast }$ with $e$ the initial edge of $g$%
. Then $x_{i}x_{j}=(v-ee^{\ast
})(v+\tsum\limits_{i=1}^{n}k_{i}g^{r_{i}})=v-ee^{\ast
}+\tsum\limits_{i=1}^{n}k_{i}g^{r_{i}}-\tsum%
\limits_{i=1}^{n}k_{i}g^{r_{i}}=v-ee^{\ast }=x_{i}$ and so we remove $x_{i}$
from the list of generators \ of $I$. Repeating this process a finite number
of times, we obtain a finite set of generators $y_{1},...,y_{t}$ of the
ideal $I$ where $y_{i}=v_{i}y_{i}v_{i}$ for all $i$ and $v_{1},...,v_{t}$
are distinct vertices in $E$. An arbitrary element $z$ of $I$ will then be
of the form 
\begin{equation*}
z=\tsum\limits_{i=1}^{r_{1}}k_{1i}\alpha _{1i}\beta _{1i}^{\ast }y_{1}\gamma
_{1i}\delta _{1i}^{\ast }+\cdot \cdot \cdot
+\tsum\limits_{i=1}^{r_{t}}k_{ti}\alpha _{ti}\beta _{ti}^{\ast }y_{t}\gamma
_{ti}\delta _{ti}^{\ast }
\end{equation*}%
where, for $s=1,...,t$, $k_{si}\in K$, $\alpha _{si},\beta _{si},\gamma
_{si},\delta _{si}$ are all paths in $E$ for various $i$. Then 
\begin{equation*}
z=\tsum\limits_{i=1}^{r_{1}}k_{1i}\alpha _{1i}\beta _{1i}^{\ast }a\gamma
_{1i}\delta _{1i}^{\ast }+\cdot \cdot \cdot
+\tsum\limits_{i=1}^{r_{t}}k_{ti}\alpha _{ti}\beta _{ti}^{\ast }a\gamma
_{ti}\delta _{ti}^{\ast }
\end{equation*}%
where $a=y_{1}+...+y_{t}\in I$. This shows that $I$ is the principal ideal
generated by the element $a.$
\end{proof}

It was shown in \cite{C} that if $E$ is a finite graph, then every ideal of $%
L_{K}(E)$ is finitely generated. From Theorem \ref{Finited Gen Principal} we
then obtain the following stronger conclusion.

\begin{corollary}
Let $E$ be a finite graph. Then every ideal of $L_{K}(E)$ is a principal
ideal.
\end{corollary}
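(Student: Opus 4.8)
The plan is to reduce the statement to Theorem~\ref{Finited Gen Principal} by showing that, when $E$ is finite, every ideal of $L_{K}(E)$ is finitely generated. This last fact is exactly what is established in \cite{C}, so one legitimate route is simply to quote it and then apply Theorem~\ref{Finited Gen Principal}. I would prefer, however, to give a self-contained argument using the machinery already set up in Section~3.

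First I would take an arbitrary ideal $I$ of $L_{K}(E)$ and invoke Theorem~\ref{Generators of Ideal}: writing $H=I\cap E^{0}$ and $S=\{v\in B_{H}:v^{H}\in I\}$, the ideal $I$ is generated by $H\cup\{v^{H}:v\in S\}\cup Y$, where $Y$ is a set of mutually orthogonal elements of the form $u+\tsum_{i=1}^{n}k_{i}g^{r_{i}}$ with $g$ a cycle without exits based at a vertex $u\in E^{0}\backslash H$, and where, by the gcd construction carried out in Lemma~\ref{Lermma3.3}, distinct members of $Y$ are based at distinct vertices. Now I would bring in the hypothesis that $E$ is a finite graph. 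Then $E^{0}$ is finite, so $H$ is finite; moreover $E$ has no infinite emitters, hence $B_{H}=\emptyset$ for every hereditary saturated subset $H$, so the middle set $\{v^{H}:v\in S\}$ is empty; and since the elements of $Y$ are indexed by distinct vertices of the finite set $E^{0}\backslash H$, we get $|Y|\le |E^{0}|<\infty$. Consequently $I$ is finitely generated.

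Finally I would apply Theorem~\ref{Finited Gen Principal} to this finitely generated ideal $I$ to conclude that $I$ is principal; since $I$ was arbitrary, every ideal of $L_{K}(E)$ is principal. There is essentially no obstacle in this argument: the only point requiring care is the passage from the (possibly infinite-looking) generating set of Theorem~\ref{Generators of Ideal} to a genuinely finite one, which rests on the two observations that a finite graph has no breaking vertices and that the orthogonal family $Y$ is parametrized by distinct vertices of $E^{0}$. If one did not wish to reprove finite generation, the single sentence ``$I$ is finitely generated by \cite{C}, hence principal by Theorem~\ref{Finited Gen Principal}'' would suffice.
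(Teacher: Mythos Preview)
Your proposal is correct, and the route you identify in your final sentence---quote \cite{C} for finite generation, then apply Theorem~\ref{Finited Gen Principal}---is exactly the paper's proof of the corollary. Your additional self-contained argument, which reads off finite generation directly from the generating set $H\cup\{v^{H}:v\in S\}\cup Y$ of Theorem~\ref{Generators of Ideal} (using that a finite graph has no breaking vertices and that distinct elements of $Y$ sit at distinct vertices), is a valid alternative that avoids the external citation; it is not in the paper but is a natural byproduct of the machinery already assembled in Section~3.
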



\begin{thebibliography}{9}
\bibitem{AA} G. Abrams and G. Aranda Pino, The Leavitt path algebra of a
graph, J Algebra, \textbf{293} (2005), 319 - 334.

\bibitem{ABCR} G. Abrams, J. P. Bell, P. Colak, and K. M. Rangaswamy,
Two-sided chain conditions on Leavitt path algebras over arbitrary graphs,
J. Alg. App., \textbf{11} (2012),

\bibitem{AMP} P. Ara, M.A. Moreno and E. Pardo, Non-stable K-theory for
graph algebras, Algebra and Representation Theory, \textbf{10} (2007), 157
-178.

\bibitem{Aranda Pino 10} G. Aranda Pino, D. Marti$^{\prime }$n Barquero, C.
Marti$^{\prime }$n Gonzalez, and M. Siles Molina, Socle theory for Leavitt
path algebras of arbitrary graphs, Rev. Mat. Iberoamericana 26(2) (2010)
611-638.

\bibitem{APS} G. Aranda Pino, E. Pardo and M. Siles Molina, Prime spectrum
and primitive Leavitt path algebras, Indiana Univ. Math. journal, \textbf{58}
(2009), 869 - 890.

\bibitem{C} P. Colak, Two-sided ideals in Leavitt path algebras, J. Alg.
App., \textbf{10 } (2011), 801 -

\bibitem{R} K.M. Rangaswamy, The theory of prime ideals of Leavitt path
algebras over arbitrary graphs, arXiv: 1106 4766v1 [Math.RA] 23 June 2011.

\bibitem{Tomforde 20} M. Tomforde, Uniqueness theorems and Ideal structure
of Leavitt path algebras, J. Algebra 318 (2007) 270 -299.
\end{thebibliography}
\end{document}